\newtheorem{definition}{Definition}
\newtheorem{proposition}{Proposition}
\newtheorem{lemma}{Lemma}
\title{Additive Collatz Trajectories}
\author{Aalok Thakkar, Mrunmay Jagadale}
\date{August 8, 2016}
\begin{document}
\maketitle

\begin{abstract}
Collatz Conjecture (also known as Ulam's conjecture and 3x+1 problem) concerns the behavior of the iterates of a particular function on natural numbers. A number of generalizations of the conjecture have been subjected to extensive study. This paper explores Additive Collatz Trajectories, a particular case of a generalization of Collatz conjecture and puts forward a sufficient and necessary condition for looping of Additive Collatz Trajectories, along with two minor results. An algorithm to compute the number of equivalence classes when natural numbers are quotiented by the limiting behavior of their corresponding trajectories is also proposed.

\textbf{Keywords: } Collatz Conjecture, Multiplicative Groups, Computational Number Theory
\end{abstract}

\section{Introduction}

\subsection{The Collatz Conjecture}

The $3x + 1$ problem is most simply stated in terms of the Collatz function $C(x)$ defined on integers as:

\[ C(x) = \begin{cases} 
      3x + 1 & \text{if } x \equiv 1 \mod 2\\
      \frac{x}{2} &\text{if } x \equiv 0 \mod 2\\ 
   \end{cases}
\]

The Collatz Conjecture states that every for every $m \in \mathbb{N}$, there exists $k \in \mathbb{N}$ such that iterate $T^{(k)}(m) = 1$. \cite{lagarias}. One natural generalization of Collatz function would be to consider an arbitrary affine linear map of $x$ instead of $3x + 1$. 

\subsection{Generalization}

We introduce the concept of a generalized Collatz function $C_{a,d,m}'(x)$ as:

\[ C'_{a,d,m}(x) = \begin{cases} 
      mx + a & \text{if } x \not\equiv 0 \mod d\\
      \frac{x}{d} &\text{if } x \equiv 0 \mod d\\
   \end{cases}
\]

where $x,a,d,m \in \mathbb{N}$. For an arbitrary choice of $(x, a, d, m)$ and for sufficient large values of $k \in \mathbb{N}$, we would like to explore the nature of $C_{a,d,m}'^{(k)}(x)$ \cite{generalized}. In this paper, we look at a particular case of the generalized Collatz function, which we term as the Additive Collatz function. An additive Collatz function $T_{a,d}(x)$ is defined as $C_{a,d,1}'(x)$. 

\subsection{Terminology}

We use the following definitions: 

\begin{definition}
An additive Collatz trajectory $O_{a,d}$ of an integer $x$ is the infinite tuple

$$O_{a,d}(x) = (x, T_{a,d}(x), T_{a,d}^{(2)}(x), ... )$$
\end{definition}

\begin{definition}

A trajectory $O = (o_0, o_1, o_2 ... )$ is said to loop if $\exists k, N \in \mathbb{N}$ such that $\forall n > N$, $o_n = o_{n+k}$.

\end{definition}

\begin{definition}

Given $a, d \in \mathbb{N}$, two natural numbers $x_1$ and $x_2$ are said to be equivalent under the orbit equivalence relation if $\exists n_1, n_2, N \in \mathbb{N}$ such that $\forall k > N$:

$$ T_{a,d}^{(k+n_1)}(x_1) = T_{a,d}^{(k+n_2)}(x_2) $$

\end{definition}

\begin{definition}

Given $a, d \in \mathbb{N}$, an orbit is an element of partition of  $\mathbb{N}$ under the orbit equivalence relation. 

\end{definition}

\section{Analysis of Additive Collatz Trajectories}

The limiting behavior of an additive Collatz trajectory is identified based on the eventual formation of loops. It is a straightforward observation that if $a$ and $d$ are not co-prime, then the trajectory does not necessarily loop.

\begin{proposition}

For non-coprime $a,d \in \mathbb{N}$, there exists $x \in \mathbb{N}$ such that $O_{a,d}(x)$ does not loop.

\end{proposition}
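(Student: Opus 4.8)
The plan is to exhibit an explicit starting value $x$ whose trajectory grows without bound, using the fact that $\gcd(a,d) = g > 1$. The key structural observation is how the additive map $T_{a,d}$ interacts with divisibility by $g$: if $x \equiv 0 \pmod g$, then since $g \mid d$ and $g \mid a$, both branches preserve divisibility by $g$ only when the division branch is taken cleanly, but the crucial point is the opposite — I want to find $x$ for which the division branch by $d$ is \emph{never} available, so the trajectory can only ever add $a$.

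First I would recall that on the non-multiple branch the map is simply $x \mapsto x + a$, which is strictly increasing. So it suffices to produce an $x$ such that every iterate $T_{a,d}^{(k)}(x)$ fails to be divisible by $d$; for such an $x$ the trajectory is $(x, x+a, x+2a, \dots)$, an unbounded strictly increasing sequence, which by the definition of looping cannot loop (looping would force the eventual values to repeat, contradicting strict monotonicity). The natural candidate is to choose $x$ with $\gcd(x,d) = g$ but $d \nmid x$, for instance $x = g$ when $d > g$ (which holds since $g \mid d$ and $g \neq d$ can be arranged, using $g > 1$ and $d > g$; if $d = g$ one picks a different residue). The heart of the argument is then to verify that once we are on the additive branch, divisibility by $d$ can never be achieved: since $g \mid a$, every iterate $x + ka$ remains congruent to $x \bmod g$, so each iterate is a multiple of $g$ but, if we ensure none is a multiple of $d$, the division branch is never triggered.

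The step I expect to be the main obstacle is pinning down the residue condition that guarantees $d \nmid (x + ka)$ for \emph{all} $k$. Writing $d = g d'$ and $a = g a'$ with $\gcd(d', a')$ possibly nontrivial, the question of whether $x + ka$ ever hits a multiple of $d$ reduces to solving $x + k g a' \equiv 0 \pmod{g d'}$, i.e. $x/g + k a' \equiv 0 \pmod{d'}$ after dividing through by $g$ (legitimate since $g \mid x$). This congruence in $k$ is solvable precisely when $\gcd(a', d') \mid (x/g)$, so the cleanest route is to choose $x$ so that $\gcd(a', d') \nmid (x/g)$, which is possible exactly when $\gcd(a',d') > 1$; in the remaining case $\gcd(a', d') = 1$ one must argue more carefully, perhaps by choosing $x$ to make the very first multiple-of-$d$ be reached only after passing through a value that itself re-enters the additive branch indefinitely, or by selecting the smallest problematic residue directly. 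I would therefore structure the proof by first handling the generic case via this gcd obstruction, and then treat the edge case $\gcd(a',d')=1$ separately, confirming in each case that the constructed trajectory is the unbounded arithmetic progression $x, x+a, x+2a, \dots$ and hence does not loop.
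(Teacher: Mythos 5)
There is a genuine gap, and it sits exactly where you predicted trouble. Your overall strategy is right and matches the paper's: find an $x$ for which the division branch is never triggered, so that the trajectory is the strictly increasing progression $x, x+a, x+2a, \dots$, which cannot loop. But your choice of residue class is precisely backwards. You insist on $x \equiv 0 \pmod g$ where $g = \gcd(a,d) > 1$, and then try to block solvability of $x + ka \equiv 0 \pmod d$ via the condition $\gcd(a',d') \nmid (x/g)$, where $a = ga'$, $d = gd'$. However, since $g$ \emph{is} the gcd of $a$ and $d$, we automatically have $\gcd(a',d') = 1$. Your ``generic case'' $\gcd(a',d') > 1$ is therefore vacuous, and the ``edge case'' you defer with hand-waving is in fact the only case. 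Worse, in that case the congruence $x/g + ka' \equiv 0 \pmod{d'}$ is always solvable in $k$ (as $a'$ is invertible modulo $d'$), so every trajectory starting at a multiple of $g$ does eventually hit a multiple of $d$ and take the division branch; indeed the paper's Proposition 4 shows such trajectories \emph{always loop}. So no choice of $x$ divisible by $g$ can ever witness the statement, and your construction cannot be patched within that residue class.

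The repair is one line, and it is the paper's proof: choose $x \not\equiv 0 \pmod g$ (e.g.\ $x = 1$, which works since $g > 1$). Then by induction $T_{a,d}^{(k)}(x) = x + ka$, because $g \mid a$ gives $x + ka \equiv x \not\equiv 0 \pmod g$, and since $g \mid d$, divisibility by $d$ would force divisibility by $g$; hence $d \nmid x + ka$ for every $k$ and the additive branch is taken forever. The trajectory is then strictly increasing and unbounded, so it does not loop — exactly the conclusion you argued correctly in your second paragraph, once the right starting residue is in hand.
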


\begin{proof} Consider $ r \not\equiv 0 \mod \gcd(a,d) $. Then, $O_{a,d}(r)$ does not loop. This can be proved by induction on natural numbers. \\

Claim: $\forall k \in \mathbb{N}, T_{a,d}^{(k)}(r) = r + ak$ \\

Base Case: Trivially true for $k = 0$\\

Induction Hypothesis: $\exists k \in \mathbb{N} \ni T_{a,d}^{(k)}(r) = r + ak$ \\

Induction Step: Let $\delta= \gcd(a,d)$. 

$$T_{a,d}^{(k)}(r) = r + ak \equiv r \mod \delta \implies T_{a,d}^{(k)}(r) \not\equiv 0 \mod d$$
$$\implies T_{a,d}^{(k+1)}(r) = T_{a,d}^{(k)}(r) + a = r + ka + a = r + (k+1)a$$

Hence, the trajectory is an increasing progression, and does not loop. \\  
\end{proof}

It can be observed that if $r$ is not a multiple of $\gcd(a,d)$, then $O_{a,d}(r)$ does not loop. We would now like to analyze the converse of this statement.

\begin{lemma}
Given $a,d \in \mathbb{N}$, if $r \equiv 0 \mod \delta$, $$T_{a,d}^{(k)}(r) = \delta T_{\frac{a}{\delta},\frac{d}{\delta}}^{(k)}\Big( \frac{r}{\delta} \Big)$$ where $\delta = \gcd(a,d)$
\end{lemma}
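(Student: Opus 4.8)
The plan is to prove the identity by induction on $k$, reducing the statement at each stage to a single comparison between one application of $T_{a,d}$ and one application of $T_{a/\delta,\,d/\delta}$. Write $\delta = \gcd(a,d)$, $a' = a/\delta$, $d' = d/\delta$, and $r' = r/\delta$ (an integer, since $r \equiv 0 \bmod \delta$). The base case $k=0$ is immediate: both sides equal $r$, because $\delta \cdot T_{a',d'}^{(0)}(r') = \delta r' = r$. For the inductive step I would assume $T_{a,d}^{(k)}(r) = \delta y$, where $y := T_{a',d'}^{(k)}(r')$, and apply $T$ once more to each side.

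The pivotal auxiliary fact I would establish first is that both trajectories select the same branch at step $k+1$. Since $d = \delta d'$, we have $d \mid \delta y$ if and only if $d' \mid y$, so $\delta y \equiv 0 \bmod d$ exactly when $y \equiv 0 \bmod d'$. This divisibility correspondence is what lets me split into matching cases. In the \emph{addition} branch (neither value divisible), the argument is clean: using $a = \delta a'$, I get $T_{a,d}(\delta y) = \delta y + a = \delta(y + a') = \delta\, T_{a',d'}(y)$, which is precisely $\delta\, T_{a',d'}^{(k+1)}(r')$, so the factor of $\delta$ passes through transparently.

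The \emph{division} branch is where I expect the real difficulty, and I would flag it as the \textbf{main obstacle}. Here $T_{a,d}(\delta y) = (\delta y)/d$, whereas $\delta\, T_{a',d'}(y) = \delta \cdot y/d'$. For the identity to hold I would need $(\delta y)/d = \delta y / d'$, i.e. the clean factor-of-$\delta$ cancellation that worked for addition does not obviously survive division, since dividing $\delta y$ by $d = \delta d'$ removes one factor of $\delta$ that the scaled trajectory retains. I would therefore scrutinize this case most carefully before committing to the statement: either there is a normalization in the intended reading of $T_{a,d}$ on multiples of $\delta$ that I am missing, or the equality should be restricted to the segment of the trajectory preceding the first division (where the addition branch alone governs the dynamics and the scaling is exact). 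Resolving the behavior of the quotient step — and confirming it is genuinely compatible with multiplying by $\delta$ — is the crux on which the whole induction turns.
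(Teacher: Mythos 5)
Your hesitation at the division branch is exactly right, and you should not try to push past it: the obstruction you isolated is fatal to the statement as written, not to your proof strategy. With $\delta = \gcd(a,d)$, $a = \delta a'$, $d = \delta d'$, and inductive hypothesis $T_{a,d}^{(k)}(r) = \delta y$ where $y = T_{a',d'}^{(k)}(r/\delta)$, the division step gives $T_{a,d}(\delta y) = \delta y/(\delta d') = y/d'$, while the lemma demands $\delta\, T_{a',d'}(y) = \delta y/d'$; these differ by the factor $\delta$ whenever $\delta > 1$. A concrete counterexample: take $a = 2$, $d = 4$ (so $\delta = 2$, $a' = 1$, $d' = 2$) and $r = 2$. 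Then $T_{2,4}^{(2)}(2) = T_{2,4}(4) = 1$, whereas $2\,T_{1,2}^{(2)}(1) = 2\,T_{1,2}(2) = 2$, so the identity already fails at $k = 2$. Note also that after the first division the iterate $y/d'$ need not be a multiple of $\delta$ at all, so the scaling hypothesis cannot be re-established at later steps. Your fallback reading --- that the identity is exact only on the segment of the trajectory preceding the first division --- is the correct salvage: on that segment the branch choices match (as you showed, $d \mid \delta y \iff d' \mid y$) and every addition step scales cleanly, but at the first division the factor of $\delta$ is irrevocably lost.

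As for comparing your attempt with the paper's proof: there is nothing to compare, because the paper states Lemma 1 with no proof whatsoever, and in fact no proof exists. Worse, the defect propagates. In the same example, the trajectory of $r = 2$ under $T_{2,4}$ is $2, 4, 1, 3, 5, 7, \dots$, and from $1$ onward every iterate is odd, hence never divisible by $4$, so the trajectory increases without bound and never loops. This contradicts Proposition 4, which asserts that $O_{a,d}(r)$ loops whenever $r \equiv 0 \bmod \gcd(a,d)$; its proof invokes Lemma 1 at precisely the step your analysis shows to be broken. The looping results remain sound only in the coprime case $\delta = 1$ (Propositions 2 and 3), where your induction closes without incident because the factor you were worried about is $1$.
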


If $r$ is a multiple of $\gcd(a,d)$, then Lemma 1 permits us to reduce our analysis to a case where $a$ and $d$ are co-prime. We now only consider the cases where $a$ and $d$ are co-prime and prove that for all natural numbers $x$, the additive Collatz trajectory $O_{a,d}(x)$ loops. This analysis is divided into three propositions.

\begin{proposition}
For co-prime $a,d \in \mathbb{N}$ given $x \in \mathbb{N}$, there exists $N(x) \in \mathbb{N}$ such that $T_{a,d}^{N(x)}(x) \leq a$ 
\end{proposition}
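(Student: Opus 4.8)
The plan is to track the trajectory only at the moments when a division by $d$ occurs, and to show that the value recorded at those moments strictly decreases as long as it exceeds $a$. First I would record the role of coprimality: starting from any $v \in \mathbb{N}$, the residues $v, v+a, \dots, v+(d-1)a$ are pairwise distinct modulo $d$ (since $a$ is invertible mod $d$), so exactly one of them, say $v + ka$ with $0 \le k \le d-1$, is divisible by $d$. Thus after at most $d-1$ additions the map reaches a multiple of $d$ and performs a division, so divisions never stop occurring. This is precisely the step that fails when $a$ and $d$ share a factor, as in Proposition 1, so coprimality enters in an essential way (I would also exclude the degenerate case $d=1$, where $T_{a,1}$ is the identity and no value ever drops).

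I would then define the macro-step $M(v) = (v + ka)/d$, the value obtained after those $k \le d-1$ additions followed by one division; the previous paragraph shows $M$ is everywhere defined. The key step is the inequality that if $v > a$ then $M(v) < v$. Indeed, $M(v) < v$ is equivalent to $v + ka < vd$, i.e.\ to $ka < v(d-1)$; and since $k \le d-1$ we have $ka \le (d-1)a < (d-1)v$ whenever $v > a$ and $d \ge 2$. Hence $M$ strictly decreases any value lying above $a$.

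With this in hand the conclusion is immediate. Writing $v_0 = x$ and $v_{i+1} = M(v_i)$ for the successive division-values, the inequality gives $v_{i+1} \le v_i - 1$ as long as $v_i > a$. A strictly decreasing sequence of natural numbers cannot continue indefinitely, so some $v_j \le a$. Since each macro-step consists of at most $d$ applications of $T_{a,d}$, the total number of iterations $N(x)$ needed to reach $v_j$ is finite, and $T_{a,d}^{N(x)}(x) = v_j \le a$, as required.

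I expect the main obstacle to be the bookkeeping around the macro-step rather than the inequality itself. One must argue carefully that the additions always terminate (the coprimality input) so that $M$ is defined, and that the intermediate values produced during the additions, which may exceed $v$, do not interfere, since the descent argument is applied only to the subsequence recorded immediately after each division. Once the reduction to this division-subsequence is set up cleanly, the inequality $M(v) < v$ and well-ordering of $\mathbb{N}$ finish the proof.
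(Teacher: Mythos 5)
Your proposal is correct, and it shares the paper's central construction: both pass to the sub-trajectory recorded immediately after each division, both use coprimality (you via invertibility of $a$ modulo $d$, the paper via B\'{e}zout's Lemma) to show divisions keep occurring, and both rest on the bound that the number of additions between consecutive divisions is at most $d-1$, giving the same macro-step recursion $n_{i+1} = (n_i + a y_i)/d$. Where you diverge is the finishing move: the paper solves the recursion in closed form and bounds it by a geometric series, obtaining $n_{k+1} \leq \frac{n_0 - a}{d^{k+1}} + a$, so that after roughly $\log_d(n_0 - a)$ macro-steps the value is at most $a$; you instead prove the one-step descent $M(v) < v$ whenever $v > a$ and $d \geq 2$, and invoke well-ordering of $\mathbb{N}$. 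Your route is more elementary --- no recursion-solving, no telescoping --- at the cost of a weaker (linear rather than logarithmic) bound on the number of macro-steps, which is immaterial for the proposition as stated. One genuine point in your favor: your explicit exclusion of $d = 1$ is not pedantry. For $d = 1$ the map $T_{a,1}$ is the identity, so the proposition as stated actually fails for any $x > a$; the paper's proof silently needs $d \geq 2$ as well (its final step requires $d^{k+1} > n_0 - a$ for large $k$, which is false when $d = 1$), so flagging the hypothesis is a small but real improvement over the paper's presentation.
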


\begin{proof}
Consider $(n_i)$ as a sub-trajectory of $O_{a,d}(x)$ such that $$n_0 = x$$ $$n_i = T_{a,d}^{(z_i)}(x)$$

where $T_{a,d}^{(z_i - 1)}(x) = dT_{a,d}^{(z_i)}(x)$. As $a$ and $d$ are co-prime, B\'{e}zout's Lemma forces the existence of such a sub-trajectory. Let $y_i = z_{i+1} - z_i - 1$. Then,

\begin{equation} \label{1}
n_{i+1} = \frac{n_i + ay_i}{d}
\end{equation}

On solving the recursion, 

$$n_{k+1} = \frac{n_0 + a\sum_{i = 0}^k y_id^i}{d^{k+1}}$$

By definition, $y_i$ is the least non-negative integer such that $n_i + ay_i$ is divisible by $d$, and hence $y_i$ is strictly less than $d$. Hence,

$$n_{k+1} \leq \frac{n_0 + a\sum_{i = 0}^k (d-1)d^i}{d^{k+1}}$$

Which simplifies to

$$n_{k+1} \leq \frac{n_0 - a}{d^{k+1}} + a$$

For sufficiently large $k$, $d^{k+1} > n_0 - a$. Hence, there is an element in the trajectory which is less than or equal to $a$. \\ 
\end{proof}

\begin{proposition}
For co-prime $a,d \in \mathbb{N}$, $O_{a,d}(x)$ loops for all $x \leq a$.
\end{proposition}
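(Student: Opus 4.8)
The plan is to show that when $x \leq a$ the trajectory remains confined to a finite set of natural numbers; since $T_{a,d}$ is a deterministic function, any trajectory taking only finitely many values must eventually repeat a value and hence loop. The crux, therefore, is to establish a uniform upper bound on every entry of $O_{a,d}(x)$.

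First I would reuse the post-division sub-trajectory $(n_i)$ from the proof of Proposition 2, where $n_0 = x$ and each $n_{i+1} = \frac{n_i + a y_i}{d}$ with $0 \leq y_i \leq d-1$. Coprimality of $a$ and $d$ guarantees, by B\'{e}zout's Lemma exactly as in Proposition 2, that among $n_i, n_i + a, \ldots, n_i + (d-1)a$ some term is divisible by $d$, so divisions recur indefinitely and the sub-trajectory is genuinely infinite. A one-line induction then shows that the bound $n_i \leq a$ is invariant: if $n_i \leq a$, then $n_{i+1} = \frac{n_i + a y_i}{d} \leq \frac{a + a(d-1)}{d} = a$, with base case $n_0 = x \leq a$.

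Next I would bound the intermediate values, those occurring strictly between consecutive divisions. Between $n_i$ and $n_{i+1}$ the trajectory passes through $n_i, n_i + a, \ldots, n_i + a y_i$, the largest of which is $n_i + a y_i = d\, n_{i+1} \leq da$. Combined with $x \leq a \leq da$, this shows every entry of $O_{a,d}(x)$ lies in the finite set $\{1, 2, \ldots, ad\}$.

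Finally, since the trajectory takes values in a finite set, the pigeonhole principle yields indices $k_1 < k_2$ with $T_{a,d}^{(k_1)}(x) = T_{a,d}^{(k_2)}(x)$; determinism of $T_{a,d}$ then forces $o_n = o_{n + (k_2 - k_1)}$ for all $n \geq k_1$, which is precisely the looping condition with $N = k_1$ and $k = k_2 - k_1$. I expect the main obstacle to be the uniform bound on \emph{all} entries, not merely the post-division ones, since a naive invariant such as ``value $\leq ad$'' is not directly preserved by the addition step; routing the bound through the sub-trajectory $(n_i)$ is what makes the argument close cleanly.
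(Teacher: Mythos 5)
Your proof is correct and takes essentially the same route as the paper: the invariant $n_i \leq a$ on the sub-trajectory of Proposition 2 via $n_{i+1} = \frac{n_i + ay_i}{d} \leq \frac{a + a(d-1)}{d} = a$, followed by finiteness and pigeonhole. You are in fact slightly more careful than the paper's own proof, which leaves implicit both the bound $n_i + ay_i = d\,n_{i+1} \leq da$ on the intermediate pre-division values and the determinism argument turning a repeated value into a loop.
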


\begin{proof}
Equation (1) and the fact that $y_i  \leq (d-1) $ implies that if $ n_i \leq a$,  then $ n_{i+1} \leq \dfrac{a + a(d-1)}{d} = a $ . Thus if  $x = n_0   \leq a $ then $ \forall j \in \mathbb{N} $ $n_j \leq a $. As there are only finitely many natural numbers less than or equal to $a$, trajectory loops.
\end{proof}

\begin{proposition}
Given $a,d \in \mathbb{N}$, $O_{a,d}(r)$ loops for all $r \equiv 0 \mod \gcd(a,d)$.
\end{proposition}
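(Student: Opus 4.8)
The plan is to assemble the pieces already in place: use Lemma 1 to remove the common factor, use Propositions 2 and 3 to settle the coprime case, and then transport the conclusion back up through the scaling supplied by Lemma 1.

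First I would set $\delta = \gcd(a,d)$ and, since $r \equiv 0 \bmod \delta$, write $r = \delta r'$ with $r' = r/\delta \in \mathbb{N}$. Put $a' = a/\delta$ and $d' = d/\delta$; by construction $a'$ and $d'$ are coprime. Lemma 1 then gives $T_{a,d}^{(k)}(r) = \delta\, T_{a',d'}^{(k)}(r')$ for every $k$, so the trajectory $O_{a,d}(r)$ is exactly $\delta$ times the trajectory $O_{a',d'}(r')$ term by term. It therefore suffices to prove that $O_{a',d'}(r')$ loops, because the looping condition is invariant under multiplication by the fixed constant $\delta$: if $o'_n = o'_{n+k}$ for all $n > N$, then $\delta o'_n = \delta o'_{n+k}$ for all $n > N$ as well.

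Next I would establish that $O_{a',d'}(r')$ loops using the coprime results. By Proposition 2 there is an index $N = N(r')$ with $v := T_{a',d'}^{(N)}(r') \leq a'$. Determinism of $T_{a',d'}$ means the tail of $O_{a',d'}(r')$ from position $N$ onward is precisely $O_{a',d'}(v)$, that is, $T_{a',d'}^{(N+j)}(r') = T_{a',d'}^{(j)}(v)$ for all $j$. Since $v \leq a'$, Proposition 3 tells us $O_{a',d'}(v)$ loops, so there exist $k, M \in \mathbb{N}$ with $T_{a',d'}^{(j)}(v) = T_{a',d'}^{(j+k)}(v)$ for all $j > M$. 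Translating back, $T_{a',d'}^{(n)}(r') = T_{a',d'}^{(n+k)}(r')$ for all $n > N + M$, which is exactly the statement that $O_{a',d'}(r')$ loops. Combined with the scaling observation above, this yields that $O_{a,d}(r)$ loops.

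The argument is essentially a bookkeeping synthesis, so I do not expect a genuine obstacle; the only point requiring care is the transfer of the looping property across the two reindexings — first the shift by $N$ used to invoke Proposition 3 on the tail, and then the multiplication by $\delta$ used to lift back from the coprime system to the original one. Both are routine once one writes the definition of looping out explicitly, as above, and checks that the quantifiers $\exists\, k, N$ survive each transformation.
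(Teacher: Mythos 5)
Your proposal is correct and follows essentially the same route as the paper's own proof: reduce to the coprime case via Lemma 1, apply Proposition 2 to reach a value at most $a'$, and invoke Proposition 3 on the tail. Your version is in fact slightly more careful than the paper's, which writes $T_{a,d}^{(N(r'))}(r') \leq a$ where it should read $T_{a',d'}^{(N(r'))}(r') \leq a'$, and which leaves the quantifier bookkeeping for the two reindexings implicit.
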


\begin{proof}
By Lemma 1, $O_{a,d}(r)$ loops if and only if $O_{\frac{a}{\delta},\frac{d}{\delta}}(\frac{r}{\delta})$ loops where $\delta = \gcd(a,d)$.

Let $a' = a/\delta$, $d' = d/\delta$ and $r' = r/\delta$. By Proposition 2, there exists $N(r')$ such that $T_{a,d}^{(N(r'))}(r') \leq a$. Let $T_{a,d}^{(N(r'))}(r') = k$. As the each term of the trajectory depends only on the previous term, $O_{a',d'}(r')$ is equal to $O_{a',d'}(k)$.
By Proposition 3, $O_{a',d'}(k)$ loops, hence $O_{a',d'}(r')$ loops. This implies that $O_{a,d}(r)$ loops. \\
\end{proof}

A straightforward implication of Proposition 4 is that if $a$ and $d$ are co-prime, then $\forall x \in \mathbb{N}$, $O_{a,d}(x)$ loops.

\section{Orbit Counting}

For co-prime $a$ and $d$, we have shown that $O_{a,d}(x)$ loops. We would now like to characterize the number of unique loops possible. For this, the concept of orbit equivalence relation is proposed. Two natural numbers are said to be equivalent, if their trajectories are the same, eventually. Formally speaking, given $a, d \in \mathbb{N}$, two natural numbers $x_1$ and $x_2$ are said to be equivalent under the orbit equivalence relation if $\exists n_1, n_2, N \in \mathbb{N}$ such that $\forall k > N$:

$$ T_{a,d}^{(k+n_1)}(x_1) = T_{a,d}^{(k+n_2)}(x_2) $$

Under the orbit equivalence relation, the set of natural numbers can be partitioned into equivalence classes. The number of equivalence classes is the same as the number of unique loops of trajectories possible. 

Proposition 3 and Proposition 4 imply that the number of unique loops formed by $O_{a,d}(x)$ where $x \in \mathbb{N}$ is the same as the number of unique loops formed by $O_{a,d}(x)$ where $x \in \mathbb{Z}/a\mathbb{Z}$. In order to count the equivalence classes, we identify them with orbits of a group action and use P\'{o}lya's Enumeration Theorem to count them. 
\subsection{Group Action}

Consider the sub-trajectory $(n_i)$ as defined in the proof of Proposition 2. We observe that given a trajectory $O_{a,d}(x)$, one can construct the sub-trajectory $(n_i)$ and vice-versa. Hence, the number loops of the sub-trajectories have a one-to-one correspondence with the loops of the trajectories.

By definition of the sub-trajectory $(n_i)$

$$n_{i+1} = d^{-1}n_i \mod a$$

For some $k$, all elements of the sub-trajectory are eventually smaller than $a$. This allows us to identify the limiting behavior of the sub-trajectory with $$(n_k,\: d^{-1}n_k \mod a,\: d^{-2}n_k \mod a,\: d^{-3}n_k \mod a,\: ...)$$ where $n_k \leq a$. Each element of the sub-trajectory is a power of $d^{-1}$ multiplied by $n_k$ and hence, the sub-trajectory can be identified with the action of the group of negative powers of $d$ on $n_k$ under multiplication modulo $a$.

Under the binary operation of multiplication modulo a natural number $a$, the numbers co-prime to $a$ (modulo $a$) form an Abelian group denoted by $(\mathbb{Z}/a\mathbb{Z})^*$. For some $d$ in $(\mathbb{Z}/a\mathbb{Z})^*$, let $H$ be the subgroup generated by $d$.

$$H = \{ d^i : i \in \mathbb{Z} \}$$

Then, the number of orbits formed by quotienting natural numbers by the nature of limiting behavior of the additive Collatz trajectory $O_{a,d}(x)$ is given by the number of orbits under the action of $H$ on $\mathbb{Z}/a\mathbb{Z}$ under the binary operation multiplication modulo $a$. 

\subsection{Computation}

Let $\xi(a,d)$ denote the number of orbits of $(\mathbb{Z}/n\mathbb{Z})$ when $H$ acts on it. By P\'{o}lya's Enumeration Theorem, we have:

$$\xi(a,d) = \frac{1}{|H|} \sum_{x \in S} |H_x|$$

where $H_x = \{g \in H: gx \equiv x \mod a \}$. $|H_x|$ can be computed by finding the number of solutions for $d^t \in H$ in the equation:

\begin{equation}
d^tx \equiv x \mod a
\end{equation}

Let $m_x = \gcd(x,a)$, $p_x = \frac{a}{m_x}$ and $q_x = \frac{x}{m_x}$. On substitution in equation (2), we have:

\begin{equation}
d^tm_xq_x \equiv m_xq_x \mod (m_xp_x)
\end{equation}

As $\gcd(p_x,q_x) = 1$, the number of solutions to equation (3) is same as the number of solutions to :

$$d^t \equiv 1 \mod p_x$$

Let the smallest solution to equation be termed as $\alpha_{p_x} (d)$. Therefore the total number of solutions to equation (2) are:
$\frac{|H|}{\alpha_{p_x}(d)}$

Hence, we have:

$$\xi(a,d) = \frac{1}{|H|}\sum_{x \in S}\frac{|H|}{\alpha_{p_x}(d)} = \sum_{x \in S} \frac{1}{\alpha_{p_x}(d)}$$

For each $p_x$, $q_x$ takes values co prime to $p_x$, hence, by counting repetitions, we get:

$$\xi(a,d) = \sum_{f | a} \frac{\phi(f)}{\alpha_{f}(d)}$$

where $\phi$ is the Euler-totient function. 

\subsection{Upper and Lower Bounds}

We can set a lower bound on $\xi(a,d)$ by considering the Carmichael function $\lambda$. 

$$\lambda(m) = \max \{ \alpha_m(d) : d \in (Z/mZ) \}$$

Hence, 

$$\xi(a,d) = \sum_{f | a} \frac{\phi(f)}{\alpha_{f}(d)} \geq \sum_{f | a} \frac{\phi(f)}{\lambda(f)} = \xi_{inf}(a)$$

One can further claim that $\xi_{inf}(a)$ is a strong lower bound for $\xi(a,d)$ as for every $a$, there exists $d$ such that for all factors $f$ of $a$, $\alpha_{f}(d) = \lambda(f)$. The proof of this claim relies on the decomposition of $(\mathbb{Z}/n\mathbb{Z})^*$ into cyclic groups.

The strong upper bound for $\xi(a,d)$ is $a$, which is attained when $d$ is $1$.

\subsection{Applications}

The computation of $\xi(a,d)$ employs factorization as well as the discrete logarithm function (in computation of $\alpha_{f}(d)$). There are no known efficient algorithms to compute either of them, making computation of $\xi(a,d)$ difficult. This difficulty can be employed for public-key cryptography. Knowing the prime factorization of $a$, can make the computation of $\xi(a,d)$ easier. Consider two primes $p$ and $q$. Then, for some $d$ co-prime to $pq$, 

$$\xi(pq,d) = 1 + \frac{\phi(p)}{\alpha_{p}(d)} + \frac{\phi(q)}{\alpha_{q}(d)} + \frac{\phi(pq)}{\alpha_{pq}(d)}$$

On simplification, we have:

$$\xi(pq,d) = 1 + \frac{p-1}{\alpha_{p}(d)} + \frac{q-1}{\alpha_{q}(d)} + \frac{(p-1)(q-1)}{\alpha_{p}(d)\alpha_{q}(d)}\gcd(\alpha_{p}(d),\alpha_{q}(d))$$

Computing $\xi(pq,d)$ would be cumbersome without using equation (9), however, it is much simpler using the prime factorization. This provides much hope for the possibility of design of a public key cryptography algorithm or key exchange system using additive Collatz trajectories.

\section{Results}

This paper puts forward the concept of Additive Collatz Trajectories and provides an analysis of their limiting behavior. A necessary and sufficient condition is provided for eventual looping of the Additive Collatz trajectories, along with a formula to compute the number of unique trajectories possible up to the orbit equivalence relation.

\section{Further Scope}

\subsection{Generalized Collatz Trajectories}

The spirit and strategy of this paper can be used to deal with generalized Collatz trajectories. One immediate result is that if the equation:

\begin{equation}
m^{r+1} x + a ( m^r + m^{r-1} + ... +m +1)  \equiv 0 \mod d
\end{equation}
does not have a solution for any $r$, then for all $x \in \mathbb{N}$, the trajectory formed by iteration of $C_{a,d,m}'(x)$ would not loop. Also, if $ m \equiv 1 \mod d $, then the equation will always have a solution. We can then define a sub-trajectory similar to that done in Proposition 2 as:
$$n_0 = x$$
$$n_{i} = C_{a,d,m}'^{(z_i)}(x)$$
where $C_{a,d,m}'^{(z_i - 1)}(x) = dC_{a,d,m}'^{(z_i)}(x)$.
We will be able to show that:

If $ d \not | n_i $ $$ n_{i+1} = \dfrac{m^{r_i}n_i + a( m^{r_i-1} + ... +m +1 )}{d}$$ where $ r  \equiv  -a^{-1}n_i \mod d$ and $ 0 < r_i \leq (d-1) $ \\
 
else,$$  n_{i+1} = \dfrac{n_i}{d} $$
\subsection{Public-key Cryptography}

As mentioned in subsection 3.4, there is a hope for developing a public-key cryptography system that relies on Additive Collatz Trajectories, particularly on counting the number of equivalence classes formed under the orbit equivalence relation. Much effort and study is required to design an implementable cryptography design, as there are a number of challenges. Firstly, there is no trivial characteristic that is common among the elements of an orbit equivalence class. Secondly, the formula for counting the number of orbit equivalence classes employs a number of functions, hence there is no natural way to compute the inverse for decryption. Lastly, the formula uses the discrete logarithm function which cannot practically be computed for larger cases. One must deal with these challenges in the process of an encryption algorithm design using the results proved in this paper.

\end{document}